\documentclass[12pt]{amsart}
\usepackage{amscd}
\usepackage{amsmath}
\usepackage{verbatim}
\usepackage[normalem]{ulem}
\usepackage{longtable}
\usepackage{dynkin-diagrams}

\usepackage{epigraph}
\setlength\epigraphwidth{.6\textwidth}
\setlength\epigraphrule{0pt}

\usepackage{amssymb}

\usepackage[all, cmtip,arrow]{xy}

\usepackage{pb-diagram,pb-xy}

\textwidth 16cm
\textheight 22cm
\headheight 0.5cm
\evensidemargin 0.2cm
\oddsidemargin 0.2cm

\newcommand{\lradG}{0.25}
\newcommand{\darkradE}{0.115}

\setlength{\unitlength}{.4in}

\numberwithin{equation}{section}

\newtheorem{thm}[equation]{Theorem}
\newtheorem{prop}[equation]{Proposition}

\newtheorem{lem}[equation]{Lemma}

\theoremstyle{definition}

\newtheorem{ntt}[equation]{}

\newcommand{\Br}{\mathop{\mathrm{Br}}}

\newcommand{\E}{\mathrm{E}}

\newcommand{\zz}{\mathbb{Z}}

\newcommand{\F}{\mathrm{F}}

\newcommand{\qq}{\mathbb{Q}}

\newcommand{\Aut}{\operatorname{Aut}}

\newcommand{\ZZ}{\mathrm{Z}}
\newcommand{\HH}{\mathrm{H}}

\marginparwidth 2.5cm

\title
[Tits construction and Rost invariant]
{Tits construction and Rost invariant}

\keywords
{Linear algebraic groups, twisted flag varieties, symmetric spaces}
\subjclass[2010]{20G15, 14C15}

\author
[Nikita Geldhauser]
{Nikita Geldhauser}

\author
[Victor Petrov]
{Victor Petrov}

\address{Geldhauser:
Mathematisches Institut der Universit\"at M\"unchen, Theresienstr. 39, D-80333 M\"unchen, Germany}
\email{geldhauser@math.lmu.de}

\address{Petrov: St.~Petersburg State University, 29B Line 14th (Vasilyevsky Island), 199178, St.~Petersburg, Russia\\
PDMI RAS, nab. Fontanki, 27, 191023, St.~Petersburg, Russia}
\email{victorapetrov@googlemail.com}

\thanks{The first author was supported by the DFG research grant SE 1721/4-1. The second author was supported (in part) by the BASIS foundation grant ``Young Russia Mathematics''}

\begin{document}

\maketitle

\epigraph{
Dedicated to Professor Nikolai Vavilov who taught us exceptional algebraic groups.\\
With gratitude, respect and grief.}

\begin{abstract}
We show a Springer type theorem for the variety of parabolic subgroups of type $1,2,6$ for all groups of type $\E_6$. As far as we know this gives the first example for the validity of the Springer theorem for projective homogeneous varieties of type $^2\E_6$ different from varieties of Borel subgroups. The proof combines several topics, notably the Rost invariant, a Tits construction, Cartan's symmetric spaces and indirectly the structure of the Chow motives of projective homogeneous varieties of exceptional type.
\end{abstract}

\newsavebox{\dEpic}
\savebox{\dEpic}(5,1){\begin{picture}(5, 1)
    \multiput(3,0.75)(1,0){2}{\circle*{\darkradE}}
    \multiput(3,0.25)(1,0){2}{\circle*{\darkradE}}
    \multiput(1,0.5)(1,0){2}{\circle*{\darkradE}}
    
    \put(1, 0.5){\line(1,0){1}}
    \put(3,0.75){\line(1,0){1}}
    \put(3, 0.25){\line(1,0){1}}
    
    \put(3,0.5){\oval(2,0.5)[l]}
    \end{picture}}

\section{Introduction}

In the present article we combine several ideas, notably the Rost invariant of algebraic groups and Cartan's symmetric spaces, to establish a theorem of  Springer type for some new classes of projective homogeneous varieties.

The celebrated classical Springer theorem asserts that an anisotropic quadratic form over a field remains anisotropic over an arbitrary field extension of the base field of odd degree. In other words, the existence of a zero-cycle of degree one on a smooth projective quadric implies that the quadric is isotropic.

Many articles are devoted to generalizations of the Springer theorem to other projective homogeneous varieties. For example, Bayer-Fluckiger and Lenstra showed the Springer theorem for all twisted forms of maximal orthogonal Grassmannians in \cite{BFL}. The Springer theorem is also known for the variety of Borel subgroups for groups of type $\F_4$ and $^1\E_6$ (a theorem of Rost), and for the variety of Borel subgroups for groups of type $^2\E_6$ (a theorem of Garibaldi \cite{G1}) and $\E_7$ (a theorem of Gille \cite[Theorem~C]{Gi1}). 

The most general known approach to prove results of Springer type relies on the norm principle of Gille \cite{Gi1} and its further refinements by Merkurjev \cite{Me}. For example, Geldhauser and Gille showed on this way in \cite{GiS} that the Springer theorem holds for the variety of maximal parabolic subgroups of type $7$ for all groups of type $\E_7$.

To stress the difficulty of the Springer theorem note that a theorem of Springer type for the varieties of Borel subgroups of groups of type $\E_8$
implies the Serre Conjecture II for fields of cohomological dimension $\le 2$. At the same time the Serre Conjecture II is considered as one of the major open problems in the theory of algebraic groups (see \cite{Gi2}).

We also refer to \cite{BQM} for related problems for algebras with involutions and to \cite{To} for related problems for groups of type $\E_8$.

In the present article we combine several ideas which are unrelated at the first glance and prove the Springer theorem for the variety of parabolic subgroups of type $1,2,6$ for groups of (inner and outer) type $\E_6$. As far as we know this gives the first example for the validity of the Springer theorem for projective homogeneous varieties of type $^2\E_6$ different from varieties of Borel subgroups.

\medskip

\textbf{Acknowledgements.} The authors would like to thank Skip Garibaldi and Philippe Gille for valuable discussions.

\section{Preliminaries on algebraic groups and symmetric spaces}

Let $G$ be a semisimple algebraic group over a field $F$ of characteristic different from $2$.

\begin{ntt}[Type of parabolic subgroups]
It is well known that there is an induced action of the absolute Galois group of $F$ on the Dynkin diagram of $G$, which
is called the $*$-action  (see \cite{KMRT}).

A {\it twisted flag variety} of $G$ over $F$ is the variety of parabolic subgroups of $G$ of some fixed type.
The type of a parabolic subgroup of $G$ corresponds to a subset of the set of vertices of the Dynkin diagram of $G$
invariant under the $*$-action, and we write $X_\Psi$ for the variety of parabolic subgroups of $G$ of type $\Psi$. In this article we use a complementary notation, 
e.g., we write $X_i$ for the variety of {\it maximal} parabolic subgroups of $G$ of type $i$. Under this convention the variery of Borel subgroups has type $1,2,\ldots,n$, where $n$ denotes the rank of $G$.

The enumeration of simple roots in this article follows Bourbaki.
\end{ntt}

\begin{ntt}[Tits algebras]

With every semisimple algebraic group $G$ over $F$ one can associate {\it Tits algebras}, which are the endomorphism rings of its irreducible representations (see \cite[\S27]{KMRT}). In general, they
are central simple algebras over finite separable extensions of $F$.
\end{ntt}

\begin{ntt}[Rost invariant and Tits index]

If $G$ an absolutely simple simply-connected algebraic group over $F$, then there is the {\it Rost invariant} $$R_G\colon \HH^1(-,G)\to \HH^3(-,\qq/\zz(2))$$ (see \cite[\S31B]{KMRT}, \cite{GMS}). Moreover, if the Tits algebras of $G$ are trivial (i.e., split), then there is a well-defined Rost invariant $r(G)\in \HH^3(F,\qq/\zz(2))$ of the group $G$ itself as explained in \cite[Lemma~2.1]{GP}.

Furthermore, with an embedding of absolutely simple simply-connected algebraic groups $G'\subset G$ one can associate a positive integer $m$ called the Rost multiplier (see \cite[Section~2.1]{G1}) such that the following diagram commutes

$$\xymatrix{
\HH^1(F,G') \ar[r]^-{R_S}\ar[d] & \HH^3(F,\qq/\zz(2)) \ar[d]^-{m\cdot} \\
           \HH^1(F,G)\ar[r]^-{R_G}  & \HH^3(F,\qq/\zz(2)).
           }$$
Note that the Rost multiplier is a combinatorial invariant and can be computed under the assumption that the base field is algebraically closed.

We will be interested in the embeddings of quasi-split simply-connected algebraic groups  ${{}^2\E_6\to\E_7}$ (see \cite[Proposition~3.6]{G1}). This embedding has Rost multiplier $1$. In particular, the respective twisted groups of type $^2\E_6$ and $\E_7$ with trivial Tits algebras have the same Rost invariant.

\medskip

With each semisimple algebraic group $G$ one can associate a {\it Tits index}, which consists of the Dynkin diagram of $G$, the $*$-action and a collection of circled vertices of the Dynkin diagram of $G$ (see \cite{Ti}). The Tits index encodes the maximal split torus of $G$ defined over $F$. If $X$ is a twisted flag $G$-variety of type $\Psi$, then $X$ is isotropic if and only if all vertices in $\Psi$ are circled on the Tits diagram of $G$.

If $G$ is an isotropic group of outer type $\E_6$ with trivial Tits algebras, then Garibaldi and Petersson describe its Rost invariant $r(G)$ in \cite[Proposition~2.3]{GP}. We represent their table below.

\begin{longtable}{cc} \\
Tits index & condition \\ \hline
quasi-split & $r(G) = 0$ \\
\parbox{5cm}{\begin{picture}(5, 1.1)
\put(0,0){\usebox{\dEpic}} 
\put(4,0.5){\oval(0.4,0.75)}
\put(1,0.5){\circle{\lradG}}
\end{picture}}& $r(G)$ is a nonzero pure symbol divisible by $K$\\
\parbox{5cm}{\begin{picture}(5, 1.1)
\put(0,0){\usebox{\dEpic}} 
\put(4,0.5){\oval(0.4,0.75)}
\end{picture}}& $r(G)$ is a symbol not divisible by $K$\\
\parbox{5cm}{\begin{picture}(5, 1)
\put(0,0){\usebox{\dEpic}} 
\put(1,0.5){\circle{\lradG}}
\end{picture}}& $r(G)$ is not a pure symbol\\
\parbox{5cm}{\begin{picture}(5, 1)
\put(0,0){\usebox{\dEpic}} 
\put(1,0.5){\circle{\lradG}}
\put(2,0.5){\circle{\lradG}}
\end{picture}} & impossible diagram, if the Tits algebras of $G$ are split\\
\caption{Rost invariant of isotropic groups of type $\E_6$}
\end{longtable}

Note that in the case $K=F\times F$, the respective group $G$ of type $\E_6$ is of inner type and there are two cases of possible Tits indices for isotropic $G$: $r(G)=0$ if and only if $G$ is split and $r(G)$ is a nonzero symbol if and only if the circled vertices of the Tits index are exactly $1$ and $6$.

\medskip

In the present article we will also essentially use the following result about the Rost invariant of groups of type $\E_7$ (see \cite[Theorem~10.10]{GPS}). Let $G$ be a group of type $\E_7$ with trivial Tits algebras and let $X$ be the variety of parabolic subgroups of $G$ of type $7$. Then the variety $X$ has a rational point if and only if $6r(G)=0$ and $3r(G)$ is a pure symbol in $\HH^3(F,\zz/2\zz)$

The proof of this result in \cite{GPS} is a combination of \cite[Corollary~3.5]{GiS} (the proof of this Corollary relies on the norm principle of Gille--Merkurjev) and of a computation of the Chow motive of the variety $X$. The latter computation relies, in particular, on the binary motive theorem, and, thus, indirectly on the structure of the Steenrod algebra.

\medskip

Finally, we will use the following result of Rost \cite[Proposition~2]{Ro}: If $\alpha\in\HH^n(F,\zz/2\zz)$ is equal to a pure symbol over some odd degree field extension of the field $F$, then $\alpha$ can be represented by a pure symbol already over $F$.
\end{ntt}

\begin{ntt}[Tits construction]
The celebrated Freudenthal--Tits magic square unites constructions of exceptional algebraic groups using two algebras as an input (see \cite[p.~540]{KMRT}). In the present article we will be interested in a construction of groups $G(A,K)$ of outer type $\E_6$ 
based on an Albert algebra $A$ and a quadratic extension $K/F$ (a $\mu_2\times \F_4$-construction). We refer to \cite{GP} for the details and properties of this construction. Note also that the Tits algebras of all groups $G(A,K)$ arising from this Tits construction are trivial.

It is well known that the Albert algebras $A$ have three cohomological invariants
$${f_3(A)\in\HH^3(F,\zz/2\zz)},\  {g_3(A)\in\HH^3(F,\zz/3\zz)}, 
\text{ and }{f_5(A)\in\HH^5(F,\zz/2\zz)}.$$ Moreover, the invariants $f_3$ and $g_3$ are the modulo $2$ and modulo $3$ components of the Rost invariant of the respective group $G(A,K)$.

Note, however, that the order of the Rost invariant for general groups of type $^2\E_6$ (i.e., of groups which do not arise from this Tits construction) and with trivial Tits algebras is $12$ and not $6$.
\end{ntt}

\begin{ntt}[Symmetric spaces]
In the present article we use one more tool, namely, Cartan's symmetric spaces.

Recall that a symmetric space is a homogeneous variety $X=G/H$, where $G$ is a reductive algebraic group and $H$ is an open subgroup of the fixed point
group of some involution $\sigma$ of $G$.

A split torus ${\mathbb G}_m$ is called $\sigma$-split, if it is $\sigma$-stable and $\sigma$ acts on it by the rule $\sigma(t)=t^{-1}$. The vertices of the Dynkin diagram corresponding to maximal parabolic subgroups whose Levi subgroups are centralizers of $\sigma$-split tori are colored in white in the \emph{Satake diagram} of $G/H$ (and the remaining vertices are colored in black). Note that Satake diagrams also contain a Galois action, but in the present article we will only use symmetric spaces EIV and EVII, where the Galois action is trivial.  

The white vertices of Satake diagrams correspond to possible types of maximal parabolic subgroups $P$ such that $\sigma(P)$ is opposite to $P$, see \cite[Lemma~2.9 and 2.11]{S}. In this case such parabolic subgroups form an open orbit on $G/P$ under the action of $H$.

The stabilizer of $P$ in $H$ can be computed as follows: one removes the white vertex corresponding to $P$ from the Satake diagram of $G$ and looks at the respective new Satake diagram in the table of symmetric spaces. It will correspond to some $G'/H'$. Then $H'$ is the connected component of the stabilizer we are looking for. Note, however, that since $P\cap\sigma(P)$ is reductive and not semisimple, an additional discrete factor may appear.

As an example we consider the symmetric space of type EVII and the vertex $i=7$ on the Dynkin diagram of type $\E_7$.

We reproduce a relevant part of the table of symmetric spaces, where $\E_6^{red}$ stands for a non-simple reductive group of type $\E_6$.

\begin{longtable}{l|l|l||l} 
Cartan type & Type of $G$ & Type of the connected & Satake diagram of $G/H$ \\
&&component of $H$ &\\
\hline
EIV &$\mathrm{E}_{6}$ &  $\mathrm {F}_4$ & 	\dynkin[edge length=.5cm] E{IV}\\
EVII& $\mathrm{E}_{7}$ & $\E_6^{red}$  & 	\dynkin[edge length=.5cm] E{VII}\\
\caption{Cartan's symmetric spaces $G/H$ and Satake diagrams}
\end{longtable}

Removing vertex $7$ for type EVII we obtain the Satake diagram of Cartan type EIV. In particular, it follows that the connected component of the stabilizer for the open orbit of the action of $\E_6^{red}$ on the $27$-dimensional variety $\E_7/P$, where $P$ stands for a parabolic subgroup of type $7$, is of Dynkin type $\F_4$.

\end{ntt}

\section{Springer theorem for groups of type $\E_6$}

Consider the split adjoint group $G$ of type $\E_7$ over a field $F$ of characteristic different from $2$. Let us look at a parabolic subgroup $P$ of type $7$. The center of its Levi subgroup is ${\mathbb G}_m$ and contains a unique nontrivial involution $\sigma$. Now $H=G^\sigma$ is the normalizer of the Levi subgroup (which is the connected component $H^\circ$) and is a semidirect product of $H^\circ$ and $\zz/2\zz$. One says that $G/H$ is a symmetric space of type EVII.

The nontrivial element $\tau$ in $\zz/2\zz$ is an involution on $G$ commuting with $\sigma$. One can show that $\tau$ and $\sigma$ are conjugate in $G$, but we will not use this fact.

Now $G/H$ is a symmetric space and, in particular, the parabolic subgroup $P$ acts on it with a finite number of orbits. Then, vice versa, $H$ acts on $G/P$ with the same number of orbits and, in particular, there is an open orbit for this action. The open orbit consists of all parabolic subgroups $Q$ of type $7$ such that $\sigma(Q)$ is opposite to $Q$. One can choose for $Q$ any (of two) parabolic subgroups containing $(G^\tau)^\circ$. Then the stabilizer $S$ of this point in $H$ equals the subgroup of $H^\tau$ generated by a split subgroup of type $\F_4$ and $\tau$, while $H^\tau$ itself is the product $S$ and $\sigma$.

We summarize the groups above in a table below. We write $L$ for the Levi subgroup and $P$ for a parabolic subgroup of $G$ of type $7$.

\begin{longtable}{l|l} 
Group & Type  \\
\hline
$G$ &$\mathrm{E}_{7}$ \\
$H=G^\sigma=N_G(L(P))$ & $\mathrm{E}_{6}^{red}\rtimes\zz/2\zz$ \\
$H^\circ=(G^\sigma)^\circ=L(P)$ & $\mathrm{E}_{6}^{red}$ \\
$H^\tau$ & $\F_4\times \zz/2\zz\times \zz/2\zz$\\
$(H^\tau)^\circ$ & $\F_4$\\
$G^\tau$ & $\mathrm{E}_{6}^{red}\rtimes\zz/2\zz$\\
$(G^\tau)^\circ$ & $\mathrm{E}_{6}^{red}$\\
$S$ & $\F_4\times \zz/2\zz$\\
\caption{Summary of groups}
\end{longtable}

\begin{prop}\label{prop:main}
The image of a class $[\xi]\in\HH^1(F,H)$ in $\HH^1(F,G)$ produces an isotropic group of type $\E_7$ possessing a parabolic subgroup $P$ of type $7$ defined over $F$ if and only if $[\xi]$ comes from some $[\zeta]\in\HH^1(F,S)$.
\end{prop}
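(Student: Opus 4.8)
The plan is to read Proposition~\ref{prop:main} as a statement about twisting a homogeneous space by its open orbit. The open $H$-orbit $O$ on $G/P$ is $H/S$, so the fibration $S\hookrightarrow H\to H/S$ yields an exact sequence of pointed sets
\[
H(F)\longrightarrow O(F)\longrightarrow \HH^1(F,S)\longrightarrow \HH^1(F,H),
\]
whence $[\xi]\in\HH^1(F,H)$ comes from some $[\zeta]\in\HH^1(F,S)$ precisely when the twisted variety ${}_\xi O$ has an $F$-rational point. It therefore suffices to identify ${}_\xi O$ and to decide when it has such a point.

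Since $\sigma$ fixes $H=G^\sigma$ pointwise, it commutes with all inner automorphisms coming from $H$, hence it descends to an $F$-involution $\tilde\sigma$ of ${}_\xi G$; and ${}_\xi(G/P)$ is the variety of parabolic subgroups of ${}_\xi G$ of type $7$ (the $*$-action on $\E_7$ is trivial). As $O=\{Q:\sigma(Q)\text{ opposite to }Q\}$ is $H$-stable, dense and open in $G/P$, its twist ${}_\xi O$ is the dense open $F$-subvariety of ${}_\xi(G/P)$ consisting of those $Q$ with $\tilde\sigma(Q)$ opposite to $Q$. The easy implication is then immediate: if $[\xi]$ lifts to $S$ then $\varnothing\neq{}_\xi O(F)\subseteq{}_\xi(G/P)(F)$, so ${}_\xi G$ has a parabolic of type $7$ over $F$ and is in particular isotropic.

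The content is the converse: from an $F$-point $Q_0$ of ${}_\xi(G/P)$ --- a type-$7$ parabolic of ${}_\xi G$ over $F$ --- one must produce an $F$-point lying in the open orbit ${}_\xi O$. I would first dispose of the case where $\tilde\sigma(Q_0)$ is opposite to $Q_0$ (then $Q_0$ itself works) and otherwise try to slide $Q_0$ towards general position using the $\tilde\sigma$-stable reductive subgroup $Q_0\cap\tilde\sigma(Q_0)$ over $F$. Making this precise requires running through the finitely many $H$-orbits $O=O_{(0)},O_{(1)},\dots,O_{(r)}$ on $G/P$ --- classically known for the minuscule $\E_7$-variety $\E_7/P_7$ regarded as a homogeneous space under the Levi $\E_6^{red}$ together with the swap $\sigma$, the strata being detected by the rank, in the Freudenthal triple system $V_{56}$, of the relevant components (so the closed stratum is a twisted form of a Cayley plane, or a Weil restriction thereof, and there is a zero-dimensional stratum $\Spec K_\xi$ arising from $H\to\zz/2\zz$) --- and then showing, for each non-open stratum $O_{(i)}$ with stabilizer $S_{(i)}$, that ${}_\xi O_{(i)}(F)\neq\varnothing$ already forces ${}_\xi O(F)\neq\varnothing$, i.e.\ that $\mathrm{im}\bigl(\HH^1(F,S_{(i)})\to\HH^1(F,H)\bigr)\subseteq\mathrm{im}\bigl(\HH^1(F,S)\to\HH^1(F,H)\bigr)$. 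This is \emph{the main obstacle}: for the small strata the hypothesis translates into the existence of $F$-points on twisted Cayley planes, respectively the splitting of $K_\xi$, and one must feed these back into the $\F_4$-/Albert-algebra description of $S$ and of the open orbit. (Since the $\E_7$-groups produced here have trivial Tits algebras, one alternative would be to compute $r({}_\xi G)$ from the cohomological invariants of the $\E_6$-data underlying $\xi$ and to compare with the $\E_7$-criterion of \cite{GPS} recalled above; but this again reduces to understanding the same orbits.)

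Once an $F$-point $Q$ of ${}_\xi O$ is in hand, the stabilizer $S=\Stab_H(Q)$ --- generated by a split $\F_4$ and $\tau$ --- provides the desired $[\zeta]\in\HH^1(F,S)$ mapping to $[\xi]$; and the quadratic algebra $K_\xi$ attached to $Q_0$ via $H\to\zz/2\zz$ is the extension that will later be matched with the one appearing in the Tits construction $G(A,K)$.
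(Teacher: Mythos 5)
Your setup is right and matches the paper's: you identify the open $H$-orbit with $H/S$, observe that $[\xi]$ lifts to $\HH^1(F,S)$ exactly when the twisted orbit ${}_\xi O={}_\xi(H/S)$ has an $F$-point (Serre, Prop.~37), and you dispatch the easy direction. But the converse --- the actual content of the proposition --- is left as an acknowledged ``main obstacle'': you propose to enumerate the non-open $H$-strata of $G/P_7$, compute their stabilizers $S_{(i)}$, and prove an inclusion of images $\mathrm{im}\bigl(\HH^1(F,S_{(i)})\to\HH^1(F,H)\bigr)\subseteq\mathrm{im}\bigl(\HH^1(F,S)\to\HH^1(F,H)\bigr)$ stratum by stratum. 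None of this is carried out, and it is far more than is needed.

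The missing idea is a one-line density argument. Reduce to $F$ infinite (the finite-field case is harmless). If ${}_\xi G$ has a parabolic $Q_0$ of type $7$ over $F$, the unipotent radical of the opposite parabolic gives an open immersion $\mathbb{A}_F^{27}\hookrightarrow{}_\xi(G/P_7)$ (the big cell). Since $F$ is infinite, $\mathbb{A}_F^{27}(F)$ is Zariski dense in ${}_\xi(G/P_7)$, hence meets the dense open subvariety ${}_\xi O$; any point of $\mathbb{A}_F^{27}(F)\cap{}_\xi O$ is the required rational point of the open orbit, and Serre's Proposition~37 finishes the proof. This completely bypasses the orbit stratification, the Freudenthal triple system, and the twisted Cayley planes you invoke. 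As written, your argument does not establish the proposition; with the density step inserted it becomes essentially the paper's proof.
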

\begin{proof}
We may assume that $F$ is infinite. As explained above, the twisted form ${}_{\xi}(G/P_7)$ is a smooth compactification of the open orbit $U={}_{\xi}(H/S)$. It has a rational point if and only if ${}_{\xi}G$ possesses a parabolic subgroup $P$ of type $7$ defined over $F$. In this case the unipotent radical of the opposite parabolic subgroup defines an open subvariety of ${}_{\xi}(G/P_7)$ isomorphic to the affine space ${\mathbb A}_F^{27}$. Since $F$ is infinite, there is a rational point in ${\mathbb A}_F^{27}\cap U$. It remains to apply \cite[Proposition~37]{Se}.
\end{proof}

Now we give an interpretation of this result in terms of a construction by Jacques Tits. Given an Albert algebra $A$ over $F$ and a quadratic field extension $K/F$ one can construct a simply connected simple group $G(A,K)$ of outer type $\E_6$ (see \cite{GP}).

We denote by $\E_6^{sc}$ (resp. $\E_6^{ad}$) a split simply connected (resp. adjoint) simple group of Dynkin type $\E_6$. All simply connected groups of type $\E_6$ over $F$ are parameterized by cocycles with values in $\Aut(\E_6^{sc})\simeq \E_6^{ad}\rtimes\zz/2\zz$. The latter group is isomorphic to $H$ modulo the center of $H^\circ$, which is ${\mathbb G}_m$.

We have the following commutative diagram
$$\xymatrix{
\HH^1(F,S) \ar[r]\ar[d]^-{=} & \HH^1(F,H) \ar[d] \\
           \HH^1(F,S)\ar[r]  & \HH^1(F,\E_6^{ad}\rtimes\zz/2\zz).
           }$$

\begin{lem}
A class $[\theta]\in\HH^1(F,\E_6^{ad}\rtimes\zz/2\zz)$ comes from $\HH^1(F,H)$ if and only if the Tits algebras are trivial. Moreover, it comes from $\HH^1(F,S)$ if and only if it is isomorphic to the Tits construction $G(A,K)$ for some $A$ and $K$.
\end{lem}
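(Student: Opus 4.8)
Both equivalences will be deduced from the exact sequence of algebraic $F$-groups
$$1\longrightarrow\mathbb{G}_m\longrightarrow H\longrightarrow\E_6^{ad}\rtimes\zz/2\zz\longrightarrow 1,$$
in which $\mathbb{G}_m=Z(H^\circ)$ is normal in $H$ and $H$ acts on it by conjugation through the quotient $\E_6^{ad}\rtimes\zz/2\zz\to\zz/2\zz$, the nontrivial element acting by $t\mapsto t^{-1}$ (this is the restriction of the outer automorphism of $\E_6$ to the centre of the Levi). Fix $[\theta]\in\HH^1(F,\E_6^{ad}\rtimes\zz/2\zz)=\HH^1\bigl(F,\Aut(\E_6^{sc})\bigr)$ and let $K=K_\theta$ be the quadratic \'etale $F$-algebra classified by the image of $[\theta]$ in $\HH^1(F,\zz/2\zz)$; thus ${}_\theta\E_6^{sc}$ is of inner type exactly when $K=F\times F$. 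Twisting the sequence by $\theta$ replaces $\mathbb{G}_m$ by the one-dimensional torus $T_\theta$ whose character lattice is $\zz$ with $\Gal(K/F)$ acting by $-1$, that is, the norm-one torus $R^{(1)}_{K/F}\mathbb{G}_m$ (equal to $\mathbb{G}_m$ when $K$ splits). Since $\HH^1(F,\mathbb{G}_m)=0$, the usual exactness of non-abelian Galois cohomology shows that $[\theta]$ comes from $\HH^1(F,H)$ if and only if the boundary $\Delta(\theta)\in\HH^2(F,T_\theta)$ of the distinguished class of the twisted sequence vanishes; and the norm sequence $1\to T_\theta\to R_{K/F}\mathbb{G}_m\to\mathbb{G}_m\to 1$ together with Hilbert~90 and Shapiro's lemma identifies $\HH^2(F,T_\theta)$ with $\ker\bigl(\cores_{K/F}\colon\Br K\to\Br F\bigr)$ (with $\Br F$ when $K$ splits).

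To finish the first equivalence I would identify $\Delta(\theta)$ with the Tits algebra class of ${}_\theta\E_6^{sc}$ by comparing the sequence above with
$$1\longrightarrow C\longrightarrow\E_6^{sc}\rtimes\zz/2\zz\longrightarrow\E_6^{ad}\rtimes\zz/2\zz\longrightarrow 1,$$
where $C=\mu_3=Z(\E_6^{sc})$ with $\zz/2\zz$ again acting by inversion, the comparison being given by the inclusions $C\hookrightarrow\mathbb{G}_m$ and $\E_6^{sc}\rtimes\zz/2\zz\hookrightarrow H$ (the latter because $H=H^\circ\rtimes\zz/2\zz$ and $[H^\circ,H^\circ]=\E_6^{sc}$). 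Naturality of boundary maps gives that $\Delta(\theta)$ is the image in $\HH^2(F,T_\theta)$ of the boundary of $[\theta]$ in $\HH^2(F,{}_\theta C)$, which pushed forward along the character of $C$ defined by $\omega_1$ is by definition the Tits algebra of ${}_\theta\E_6^{sc}$ attached to $\omega_1$ --- a class over $K$ in $\ker(\cores_{K/F})$ (see \cite[\S27]{KMRT}). As every Tits algebra of a group of type $\E_6$ is Brauer-equivalent to a tensor power of this one, $\Delta(\theta)=0$ if and only if all Tits algebras of ${}_\theta\E_6^{sc}$ are split. (Concretely: a lift to $\HH^1(F,H)$ yields an $F$-form $W$ of the $54$-dimensional representation $V_{\omega_1}\oplus V_{\omega_6}$ of $H$; the commutant of the twisted identity component ${}_\eta H^\circ$ in $\End_F(W)$ is the \'etale algebra $K$, so $W$ is a $27$-dimensional $K$-space on which $({}_\theta\E_6^{sc})_K$ acts $K$-linearly, realizing the minuscule representation over $K$ and forcing the Tits algebra to split. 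Conversely, starting from the minuscule representation over $K$ and the outer involution exchanging $\omega_1$ and $\omega_6$ one reconstructs the required $F$-form of $H$.)

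For the second equivalence, the commutative square in the text reduces the problem to describing the image of $\HH^1(F,S)$ in $\HH^1\bigl(F,\Aut(\E_6^{sc})\bigr)$ under the embedding $S=\F_4\times\zz/2\zz\hookrightarrow\E_6^{ad}\rtimes\zz/2\zz$, which is the standard $\F_4\subset\E_6^{ad}$ on the first factor and the outer $\zz/2\zz$ on the second. Since $S$ is a direct product, a class in $\HH^1(F,S)=\HH^1(F,\F_4)\times\HH^1(F,\zz/2\zz)$ is a pair $(A,K)$ with $A$ an Albert $F$-algebra and $K$ a quadratic \'etale $F$-algebra, and by the definition of the $\mu_2\times\F_4$-construction (see \cite{GP}) the image of $(A,K)$ in $\HH^1\bigl(F,\Aut(\E_6^{sc})\bigr)$ is the isomorphism class of $G(A,K)$. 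Because $\HH^1\bigl(F,\Aut(\E_6^{sc})\bigr)$ classifies $F$-forms of $\E_6^{sc}$ up to isomorphism, ${}_\theta\E_6^{sc}$ is isomorphic to some $G(A,K)$ if and only if $[\theta]$ lies in that image, i.e. comes from $\HH^1(F,S)$.

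The main obstacle is the identification in the second paragraph: one must handle the twisted torus $R^{(1)}_{K/F}\mathbb{G}_m$ and the fact that in the outer case the relevant Tits algebra lives over $K$, and verify that the boundary class controlling the lifting problem is, up to sign, exactly the class defining that Tits algebra. The remaining ingredients --- the cohomology of a norm-one torus, the identification of $\HH^1(F,S)$, and the translation of the Tits construction into the language of torsors --- are routine.
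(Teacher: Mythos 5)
Your argument is correct and follows essentially the same route as the paper: both twist the sequence $1\to\mathbb{G}_m\to H\to\E_6^{ad}\rtimes\zz/2\zz\to 1$ to get the norm-one torus $R^{(1)}_{K/F}\mathbb{G}_m$, identify the resulting boundary class with the Tits algebra class in $\HH^2(F,R^{(1)}_{K/F}\mathbb{G}_m)\le\Br(K)$, and for the second claim identify $\HH^1(F,S)$ with pairs $(A,K)$ whose image is $G(A,K)$ by the very definition of the Tits construction. Your comparison with the central extension by $\mu_3=Z(\E_6^{sc})$ usefully spells out the identification of the boundary with the Tits algebra, which the paper only asserts.
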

\begin{proof}
Note that $[\theta]$ defines a group of type $\E_6$ and every group of type $\E_6$ over $F$ is equipped with a quadratic extension $K/F$.

Twisting the exact sequence
$$
\xymatrix{
1\ar[r] & {\mathbb G}_m\ar[r] & H\ar[r] & \E_6^{ad}\rtimes\zz/2\zz\ar[r]&1
}
$$
by the element in $\HH^1(F,\zz/2\zz)$ corresponding to $K$ one obtains
$$
\xymatrix{
1\ar[r] & R_{K/F}^{(1)}({\mathbb G}_m)\ar[r] & \E_6^{qs,red}\rtimes\zz/2\zz\ar[r] & \E_6^{qs,ad}\rtimes\zz/2\zz\ar[r]&1,
}
$$
where $qs$ means a quasi-split group and $R_{K/F}^{(1)}$ stands for the kernel of the norm map.

The class $[\theta']$ corresponding to $[\theta]$ belongs actually to $\HH^1(F,\E_6^{qs,ad})$, and the connecting map sends it to the element in $\HH^2(F,R_{K/F}^{(1)}({\mathbb G}_m))\le\Br(K)$ corresponding to the class of the Tits algebra, and the first claim follows.

Consider the split Albert algebra $A_0$ and the trivial \'etale extension $F\times F$. Then $G(A_0,F\times F)$ is a split group of type $\E_6$. Now twisting by 
$$\ZZ^1(F,\Aut(A_0)\times\Aut(F\times F))=\ZZ^1(F,S)$$
produces $G(A,K)$, and each $G(A,K)$ can be obtained in this way.
\end{proof}

\begin{thm}\label{thm:main}
Consider a simply connected group ${\mathcal G}$ of type $\E_6$ over $F$ with trivial Tits algebras. Then ${\mathcal G}$ can be obtained by the Tits construction if and only if $3r({\mathcal G})$ is a pure symbol in $\HH^3(F,\zz/2\zz)$.
\end{thm}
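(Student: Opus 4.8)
The plan is to combine the previous two results—Proposition~\ref{prop:main} (together with the Lemma identifying which classes come from $\HH^1(F,S)$) and the cited $\E_7$ theorem of \cite{GPS}—via the embedding ${}^2\E_6\to\E_7$ of quasi-split groups, which has Rost multiplier $1$. Concretely, let $\mathcal G$ be a simply connected group of type $\E_6$ with trivial Tits algebras. By the Lemma, $\mathcal G$ is isomorphic to a Tits construction $G(A,K)$ if and only if the corresponding class $[\theta]\in\HH^1(F,\E_6^{ad}\rtimes\zz/2\zz)$ comes from $\HH^1(F,S)$; and since the Tits algebras are trivial, $[\theta]$ lifts to some $[\xi]\in\HH^1(F,H)$, with $[\theta]$ coming from $\HH^1(F,S)$ if and only if $[\xi]$ does (this is exactly the commutative square preceding the Lemma). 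By Proposition~\ref{prop:main}, $[\xi]$ comes from $\HH^1(F,S)$ precisely when the twisted group ${}_\xi G$ of type $\E_7$—which is the image of $\mathcal G$ under the embedding ${}^2\E_6\to\E_7$—possesses a parabolic subgroup of type $7$ defined over $F$.

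Next I would translate this last condition into a condition on the Rost invariant using \cite[Theorem~10.10]{GPS}. The group ${}_\xi G$ of type $\E_7$ has trivial Tits algebras (it arises from a group of type $^2\E_6$ with trivial Tits algebras via an embedding of Rost multiplier $1$, and one checks the Tits algebras remain trivial), so the cited theorem applies: ${}_\xi G$ has a parabolic of type $7$ over $F$ if and only if $6r({}_\xi G)=0$ and $3r({}_\xi G)$ is a pure symbol in $\HH^3(F,\zz/2\zz)$. Because the embedding has Rost multiplier $1$, we have $r({}_\xi G)=r(\mathcal G)$. Hence $\mathcal G$ comes from the Tits construction if and only if $6r(\mathcal G)=0$ and $3r(\mathcal G)$ is a pure symbol in $\HH^3(F,\zz/2\zz)$.

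It remains to see that the condition ``$3r(\mathcal G)$ is a pure symbol in $\HH^3(F,\zz/2\zz)$'' already implies ``$6r(\mathcal G)=0$'', so that the single stated condition suffices. Here I would argue as follows: for a group of type $^2\E_6$ with trivial Tits algebras the Rost invariant $r(\mathcal G)$ has order dividing $12$, so $6r(\mathcal G)$ is a $2$-torsion class, i.e. lies in $\HH^3(F,\zz/2\zz)$; and $6r(\mathcal G)=2\cdot 3r(\mathcal G)$, where $3r(\mathcal G)$ is the $2$-primary part. If $3r(\mathcal G)$ is a pure symbol in $\HH^3(F,\zz/2\zz)$, then already $3r(\mathcal G)$ is $2$-torsion, hence $6r(\mathcal G)=2\cdot 3r(\mathcal G)=0$ because multiplication by $2$ kills $\HH^3(F,\zz/2\zz)$. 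Thus the two conditions of \cite[Theorem~10.10]{GPS} collapse to the single condition that $3r(\mathcal G)$ be a pure symbol, and the theorem follows.

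The main obstacle I expect is the bookkeeping around torsion and the precise meaning of ``$3r(\mathcal G)$ is a pure symbol in $\HH^3(F,\zz/2\zz)$'': one must be careful that $3r(\mathcal G)$ indeed lands in the $2$-torsion subgroup $\HH^3(F,\zz/2\zz)\subset\HH^3(F,\qq/\zz(2))$ (using that the order of $r(\mathcal G)$ divides $12$), and that the identification $r({}_\xi G)=r(\mathcal G)$ under the embedding is legitimate—i.e. that the Tits algebras of the $\E_7$-group obtained are genuinely trivial, so that $r$ of the group itself (not just the Rost invariant of the torsor) is well-defined and the cited $\E_7$ theorem is applicable. Once these compatibilities are checked, the rest is a direct concatenation of the quoted results.
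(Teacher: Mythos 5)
Your proposal is correct and follows essentially the same route as the paper: lift $[\theta]$ to $[\xi]\in\HH^1(F,H)$, push to $\HH^1(F,G)$ for $G$ split adjoint of type $\E_7$, apply \cite[Theorem~10.10]{GPS} together with Proposition~\ref{prop:main} and the Lemma. Your extra remarks—that the pure-symbol condition already forces $6r(\mathcal G)=0$, and that one must check the $\E_7$-group has trivial Tits algebras so the cited theorem applies—are exactly the compatibilities the paper's terser proof leaves implicit.
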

\begin{proof}
Recall that above we denoted by $G$ a split adjoint group of type $\E_7$.

Let $[\theta]\in\HH^1(F,\E_6^{ad}\rtimes\zz/2\zz)$ be a class defining ${\mathcal G}$. Since the Tits algebras are trivial, it can be lifted to a class $[\xi]\in\HH^1(F,H)$ uniquely. Denote by $[\phi]$ the image of $[\xi]$ in $\HH^1(F,G)$. Then the Rost invariants of $[\phi]$ and $\mathcal G$ are equal.

By \cite[Theorem~10.10]{GPS} ${}_\phi(G/P)$, where $P$ is a parabolic subgroup of $G$ of type $7$, has a rational point if and only if $3r({\mathcal G})$ is a pure symbol in $\HH^3(F,\zz/2\zz)$. Now the claim follows from Proposition~\ref{prop:main}.
\end{proof}

As an application we prove a version of the Springer theorem.

\begin{thm}\label{thm:Sp}
Let $[\theta]$ be a class in $\HH^1(F,\E_6^{ad}\rtimes\zz/2\zz)$ and $X={}_\theta(\E_6/P_{1,2,6})$ be the corresponding twisted flag variety of parabolic subgroups of type $1,2,6$. Assume that $X$ has a rational point over finite field extensions $F_i/F$ with $\gcd([F_i:F])=1$. Then $X$ has a rational point over $F$.
\end{thm}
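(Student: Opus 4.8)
The plan is to reduce the statement to Theorem~\ref{thm:main} together with Rost's theorem on pure symbols over odd degree extensions. First I would observe that the variety $X={}_\theta(\E_6/P_{1,2,6})$ has a rational point over $F$ if and only if the Tits index of the group ${\mathcal G}$ defined by $[\theta]$ has vertices $1,2,6$ circled; consulting the table ``Rost invariant of isotropic groups of type $\E_6$'', this happens exactly when the Tits algebras of ${\mathcal G}$ are trivial \emph{and} $3r({\mathcal G})$ is a pure symbol (in the inner case, $r({\mathcal G})$ is a nonzero symbol precisely when vertices $1,6$ are circled, and $2$ is then automatically circled; in the outer case the relevant diagram is the one with vertex $1$ circled and the $F_4$-branch circled, corresponding to ``$r(G)$ is not a pure symbol'' being excluded and the symbol being divisible by $K$, i.e.\ $3r({\mathcal G})$ a pure symbol). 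So the first step is to translate ``$X$ has a rational point'' into the two conditions: trivial Tits algebras, and $3r({\mathcal G})$ a pure symbol in $\HH^3(F,\zz/2\zz)$ — equivalently, by Theorem~\ref{thm:main}, into ``${\mathcal G}$ arises from the Tits construction $G(A,K)$''.

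Next I would run the odd-degree descent on each of these pieces separately. Suppose $X$ has a rational point over each $F_i$ with $\gcd([F_i:F])=1$. Over each $F_i$ the Tits algebras of ${\mathcal G}_{F_i}$ are split; since the Tits algebras are central simple algebras over $F$ or over the quadratic extension $K$, and a central simple algebra split by a finite extension of degree coprime to its index must already be split (the index divides that degree), taking $F_i$ running over a coprime family forces the Tits algebras to be split over $F$ already. Likewise, over each $F_i$ we get that $3r({\mathcal G})=3r({\mathcal G})_{F_i}$ is a pure symbol in $\HH^3(F_i,\zz/2\zz)$; but the classes $[F_i:F]$ have gcd $1$, so some subfamily has \emph{odd} total relationship — more precisely, pick the $F_i$ with odd degree (such an $F_i$ exists because not all $[F_i:F]$ can be even if their gcd is $1$), and apply Rost's result \cite[Proposition~2]{Ro}: a class in $\HH^3(F,\zz/2\zz)$ that becomes a pure symbol over an odd degree extension is already a pure symbol over $F$. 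Hence $3r({\mathcal G})$ is a pure symbol over $F$.

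Combining the two conclusions: ${\mathcal G}$ has trivial Tits algebras over $F$ and $3r({\mathcal G})$ is a pure symbol over $F$, so by Theorem~\ref{thm:main} the group ${\mathcal G}$ comes from the Tits construction, and reading the table backwards the Tits index of ${\mathcal G}$ over $F$ has vertices $1,2,6$ circled, i.e.\ $X$ has a rational point over $F$.

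I expect the main obstacle to be the very first step: carefully checking, via the classification table of Tits indices of groups of type $\E_6$, that ``$X_{1,2,6}$ isotropic'' is equivalent to the conjunction ``trivial Tits algebras and $3r({\mathcal G})$ a pure symbol'', in both the inner and the outer case. One must make sure that circling vertices $1,2,6$ really forces the whole $\F_4$-subdiagram to be circled in the outer case and that no intermediate Tits index with only some of $1,2,6$ circled can occur; here the constraint that the Tits algebras are (already) trivial is what rules out the ``impossible diagram'' row and pins things down. The descent arguments themselves are then routine: index of a central simple algebra under coprime base change, and the cited Rost proposition.
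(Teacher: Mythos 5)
Your descent steps (restriction--corestriction for the Tits algebras, Rost's odd-degree descent applied to the $2$-primary part of the Rost invariant) agree with the paper, but the ``first step'' on which everything else rests is false, and this is a genuine gap. The table of Rost invariants only describes \emph{isotropic} groups: it gives a necessary condition on $r({\mathcal G})$ for each Tits index, but it does not say that a group with trivial Tits algebras and $3r({\mathcal G})$ a pure symbol is isotropic. Indeed, by Theorem~\ref{thm:main} such a group is exactly a Tits construction $G(A,K)$, and $G(A,K)$ is anisotropic whenever the Albert algebra $A$ has no nonzero nilpotents --- e.g.\ when $A$ is a division algebra with $g_3(A)\neq 0$ (then $3r=0$ is trivially a pure symbol), or when $A$ is reduced with $f_5(A)\neq 0$. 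So ``trivial Tits algebras $+$ $3r$ pure symbol'' does not imply that $X_{1,2,6}$ has a rational point. Your reading of the inner case is also wrong: for inner type with trivial Tits algebras the index with exactly $\{1,6\}$ circled (anisotropic kernel of type $\D_4$) does occur, so vertex $2$ is \emph{not} automatically circled; there the correct criterion is $r({\mathcal G})=0$, not ``$3r$ a pure symbol''. Finally, even among isotropic outer forms you must separate the row with only $\{1,6\}$ circled from the row with $\{1,6\}$ and $\{2\}$ circled, and the distinguishing feature is divisibility of the symbol by $K$, which your argument never establishes.

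The paper closes these gaps as follows: besides showing $2r({\mathcal G})=0$ and that $r({\mathcal G})$ is a pure symbol (Rost descent, as you do), it shows by a further restriction--corestriction that the symbol splits over $K$ (this rules out the $\{1,6\}$-only index), invokes Theorem~\ref{thm:main} to realize ${\mathcal G}=G(A,K)$ with $f_3(A)=r({\mathcal G})$ and $g_3(A)=0$, and then proves that ${\mathcal G}$ is actually isotropic by reducing, via \cite[Theorem~0.2]{GP}, to the vanishing of the invariant $f_5(A)$, which again follows by restriction--corestriction. Only after isotropy is established does \cite[Proposition~2.3]{GP} pin the Tits index down to one with $1,2,6$ circled. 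The splitting-over-$K$ step and the $f_5$ step are essential and are absent from your proposal.
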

\begin{proof}
First note that the Tits algebras of the respective group of type $\E_6$ are trivial over all $F_i$ and so by the restriction-corestriction argument
they are trivial over $F$.

Consider the group ${\mathcal G}$ corresponding to $\theta$ and look at its Rost invariant $r({\mathcal G})$. Using restriction-corestriction argument again we see that $2r({\mathcal G})=0$. Moreover, by \cite[Proposition~2.3]{GP} over each field $F_i$ the respective Rost invariant $r_{F_i}({\mathcal G})$ is a pure symbol in $\HH^3(F,\zz/2\zz)$. Since there exists $F_i$ with an odd degree $[F_i:F]$, by \cite[Proposition~2]{Ro} $r({\mathcal G})$ is a pure symbol in $\HH^3(F,\zz/2\zz)$ as well.

This symbol splits over $K$ by \cite[Proposition~2.3]{GP} and by the restriction-corestriction argument. By Theorem~\ref{thm:main} ${\mathcal G}$ can be obtained by the Tits construction from some Albert algebra $A$. We have $f_3(A)=r({\mathcal G})$ and $g_3(A)=0$. By \cite[Proposition~2.3]{GP} it remains to show that ${\mathcal G}$ is isotropic. By \cite[Theorem~0.2]{GP} it suffices to show that $A$ contains nonzero nilpotent elements, i.e., that the invariant $f_5(A)=0$. But again this can be deduced
using the restriction-corestriction argument.
\end{proof}


\begin{thebibliography}{KMRT}

\bibitem[BFL]{BFL}  {\sc E.~Bayer-Fluckiger, H. W. Lenstra, Jr.}, Forms in odd degree extensions and self-dual normal bases, {\it Amer. J. Math.} {\bf 112} (1990), 359--373.

\bibitem[BQM]{BQM} {\sc J.~Black, A.~Qu\'eguiner-Mathieu}, Involutions, odd degree extensions and generic splitting,
{\it Enseign. Math.} {\bf 60} (2014), no. 3/4, 377--395.

\bibitem[G1]{G1} {\sc S.~Garibaldi}, The Rost invariant has trivial kernel for quasi-split groups of low rank, {\it Comm. Math. Helv.} {\bf 76} (2001), 684--711.

\bibitem[G2]{G} {\sc S.~Garibaldi}, Cohomological invariants: exceptional groups and Spin groups, {\it Memoirs of the AMS} {\bf 200} (2009).

\bibitem[GMS]{GMS}
{\sc S.~Garibaldi, A.~Merkurjev, J-P.~Serre},
Cohomological invariants in Galois cohomology,
AMS, Providence, RI, 2003.

\bibitem[GP]{GP}
{\sc S.~Garibaldi, H.~Petersson}, Groups of outer type $E_6$ with trivial Tits algebras,
{\it Transformation Groups} {\bf 12} (2007), 443--474.

\bibitem[GPS]{GPS}
{\sc S.~Garibaldi, V.~Petrov, N.~Semenov},
Shells of twisted flag varieties and the Rost invariant, {\it Duke Math. J.} {\bf 165} (2016), no.~2, 285--339. 

\bibitem[GS]{GS} {\sc S.~Garibaldi, N.~Semenov},
Degree $5$ invariant of $\E_8$,
{\it Int. Math. Res. Not.} 2010, no.~19, 3746--3762. 


\bibitem[Gi1]{Gi1} {\sc Ph. Gille}, La $R$-equivalence sur les groupes alg\'ebriques r\'eductifs
d\'efinis sur un corps global, {\it Inst. Hautes \'Etudes Sci. Publ. Math.} {\bf 86} (1997), 199--235.

\bibitem[Gi2]{Gi2}  {\sc Ph. Gille}, Serre’s Conjecture II: A Survey, {\it In: Quadratic Forms, Linear Algebraic Groups, and Cohomology}, Dev. in Math, Springer Verlag, 2010.

\bibitem[GiS]{GiS} {\sc Ph. Gille, N.~Semenov}, Zero cycles on projective varieties and the norm principle, {\it Compos. Math.} {\bf 146} (2010), 457--464.


\bibitem[KMRT]{KMRT}
{\sc M-A.~Knus, A.~Merkurjev, M.~Rost, J-P.~Tignol},
The book of involutions,
{\em Colloquium Publications}, vol.~44, AMS 1998.

\bibitem[Me]{Me}  {\sc A. Merkurjev}, A norm principle for algebraic groups, {\it St. Petersburg Math. J.} {\bf 7} (1996), no.~2, 243--264.

\bibitem[Ro]{Ro} {\sc M.~Rost}, A descent property for Pfister forms, {\it J. Ramanujan Math. Soc.} {\bf 14} (1999), no.~1, 55--63.

\bibitem[Se]{Se} {\sc J.-P.~Serre}, Galois cohomology, {\it Springer Monographs in Mathematics} (1997).

\bibitem[S]{S} {\sc T.A.~Springer}, Decompositions related to symmetric varieties, {\it J. Algebra} {\bf 329} (2011), 260--273.

\bibitem[Ti]{Ti} {\sc J.~Tits}, Classification of algebraic semisimple groups,
In {\it Algebraic Groups and Discontinuous Subgroups} (Proc. Symp. Pure Math.),
Amer. Math. Soc., Providence, R.I., 1966, 33--62.

\bibitem[To]{To} {\sc B.~Totaro}, Splitting fields for 
$E_8$-torsors, {\it Duke Math. J.} {\bf 121} (2004), no.~3, 425--455.

\end{thebibliography}
\end{document}